\theoremstyle{plain}
\newtheorem{theorem}{Theorem}
\newtheorem*{theorem*}{Theorem}
\numberwithin{equation}{section}
\newcommand{\R}{\mathbb{R}}
\newcommand{\sn}{{\rm sn}}
\newcommand{\ii}{{\rm i}}
\newcommand{\dd}{{\rm dn}_2}
\newcommand{\ddd}{{\rm dn}_3}
\newcommand{\pk}{p_{\kappa}}
\newcommand{\pl}{p_{\lambda}}
\newcommand{\ok}{\omega_{\kappa}}
\newcommand{\ol}{\omega_{\lambda}}
\begin{document}

\title {The B-B-G Transfer Principle for signature four}

\date{}

\author[P.L. Robinson]{P.L. Robinson}

\address{Department of Mathematics \\ University of Florida \\ Gainesville FL 32611  USA }

\email[]{paulr@ufl.edu}

\subjclass{} \keywords{}

\begin{abstract}
We show how the elliptic function $\dd$ of Shen leads to the signature four transfer principle of Berndt, Bhargava and Garvan. 
\end{abstract}

\maketitle

\medbreak 

Berndt, Bhargava and Garvan in [1995] established a Transfer Principle by which to pass from the classical theory of elliptic functions to the Ramanujan theory of elliptic functions in signature four. The essence of their transfer principle is contained in a pair of identities that relate the `classical' hypergeometric function $F_2 = F(\tfrac{1}{2}, \tfrac{1}{2}; 1 ; \bullet)$ to the `signature four' hypergeometric function $F_4 = F(\tfrac{1}{4}, \tfrac{3}{4}; 1 ; \bullet)$: explicitly, if $0 < x < 1$ then 
$$\sqrt{1 + x} \, F_4 (x^2) = F_2 ( 2x / (1 + x) )$$
and 
$$\sqrt{1 + x} \, F_4 (1 - x^2) = \sqrt2 \, F_2 ( (1 - x) / (1 + x) ).$$
These identities are derived in [1995] from hypergeometric identities recorded in the second notebook of Ramanujan: the first is an identity of Kummer, while the second also involves an identity due to Gauss. 

\medbreak 

Shen in [2014] revealed an actual elliptic function that naturally resides in signature four: his function $\dd$ involves incomplete integrals of the hypergeometric function $F(\tfrac{1}{4}, \tfrac{3}{4}; \tfrac{1}{2} ; \bullet)$; its construction is motivated by the way in which the classical Jacobian elliptic functions may be developed from $F(\tfrac{1}{2}, \tfrac{1}{2}; \tfrac{1}{2} ; \bullet)$. Among the results in [2014] are identifications of the fundamental periods of $\dd$ in terms of the hypergeometric function $F_4$; these identifications are established by explicit integral calculations. Also included in [2014] are formulae for $\dd$ and its companion functions ${\rm cn}_2$ (which is elliptic) and ${\rm sn}_2$ (which is not) in terms of the classical Jacobian functions ${\rm dn}$, ${\rm cn}$ and ${\rm sn}$ to a related modulus; these formulae are established with the aid of theta functions. 

\medbreak 

In this paper, we reconsider the elliptic function $\dd$ of Shen and thereby forge a new route to the signature four transfer principle of Berndt, Bhargava and Garvan. Our reconsideration of $\dd$ takes place entirely within the realm of elliptic functions, without the use of theta functions as intermediaries. The title of our paper notwithstanding, our primary aim is not to offer a new proof of the transfer principle: after all, the original proof by manipulation of hypergeometric functions is arguably more direct than a proof based on elliptic functions; rather, it is to reaffirm the status of $\dd$ as a natural elliptic function within the signature four theory. 

\medbreak 

The organization of this paper is as follows. In Section 1 we introduce the Shen elliptic function $\dd$ of modulus $\kappa$ and identify its coperiodic Weierstrass function $\pk$ in terms of its invariants. In Section 2 we identify the fundamental periods of $\dd$ and $\pk$ in terms of the signature four hypergeometric function $F_4$. In Section 3 we identify these fundamental periods in terms of the classical hypergeometric function $F_2$. Finally, in Section 4 we compare these two identifications of the periods, deducing the pair of identities that we displayed in our opening paragraph; these identities then yield the relationship between the signature four base $q_4$ and the classical base $q$ on which rests the Berndt-Bhargava-Garvan transfer principle. 

\bigbreak 

\section{The elliptic function $\dd$}

\medbreak 

Fix $\kappa \in (0, 1)$ as modulus, with corresponding (acute) modular angle $\alpha \in (0, \tfrac{1}{2} \pi)$ defined by $\sin \alpha = \kappa$ and with complementary modulus $\lambda \in (0, 1)$ defined by $\lambda = (1 - \kappa^2)^{1/2}$. The rule 
$$f(T) = \int_0^T F(\tfrac{1}{4}, \tfrac{3}{4}; \tfrac{1}{2} ; \kappa^2 \sin^2 t) \, {\rm d}t$$ 
defines a strictly increasing bijection $f : \R \to \R$. We write $\phi : \R \to \R$ for its inverse: thus, if $u \in \R$ then 
$$u = \int_0^{\phi (u)} F(\tfrac{1}{4}, \tfrac{3}{4}; \tfrac{1}{2} ; \kappa^2 \sin^2 t) \, {\rm d}t.$$ 
A subsidiary angular function with range $[- \alpha, \alpha]$ is then defined as the composite 
$$\psi = \arcsin (\kappa \sin \phi).$$

\medbreak 

Now, the function 
$$d_{\kappa} = \cos \psi : \R \to \R$$ 
has range $[\cos \alpha, 1] = [\lambda, 1]$ and satisfies the following initial value problem. 

\medbreak 

\begin{theorem} \label{d}
The function $d_{\kappa}$ has initial value $d_{\kappa} (0) = 1$ and satisfies the differential equation 
$$(d_{\kappa}')^2 = 2 (1 - d_{\kappa}) (d_{\kappa}^2 - \lambda^2).$$ 
\end{theorem}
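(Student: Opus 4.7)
The plan is to pass through the chain of definitions, reducing everything to an algebraic identity in $d_\kappa$. For the initial value, $f(0)=0$ gives $\phi(0) = 0$, whence $\psi(0) = \arcsin 0 = 0$ and $d_\kappa(0) = \cos 0 = 1$. For the differential equation, I would first square the defining relation to get the master identity
$$d_\kappa^2 \; = \; \cos^2 \psi \; = \; 1 - \kappa^2 \sin^2 \phi,$$
from which the two substitutions
$$1 - d_\kappa^2 \; = \; \kappa^2 \sin^2 \phi, \qquad d_\kappa^2 - \lambda^2 \; = \; \kappa^2 \cos^2 \phi$$
follow at once (the second uses $\lambda^2 = 1 - \kappa^2$). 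Their product rewrites the right-hand side of the claimed ODE as $(1-d_\kappa)(1+d_\kappa)(d_\kappa^2 - \lambda^2) = (1-d_\kappa)\,\kappa^4\sin^2\phi\cos^2\phi/(1+d_\kappa) \cdot (1+d_\kappa)$, and differentiating the master identity then squaring yields
$$(d_\kappa')^2 \; = \; \frac{\kappa^4 \sin^2\phi\cos^2\phi \,(\phi')^2}{d_\kappa^2}.$$

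So the proof reduces to showing $(\phi')^2 = 2 d_\kappa^2 / (1+d_\kappa)$. Differentiating the relation $u = f(\phi(u))$ gives
$$\phi'(u) \; = \; \frac{1}{F(\tfrac14,\tfrac34;\tfrac12;\kappa^2\sin^2\phi)} \; = \; \frac{1}{F(\tfrac14,\tfrac34;\tfrac12;\sin^2\psi)},$$
and at this point I would invoke the closed-form evaluation
$$F\bigl(\tfrac14,\tfrac34;\tfrac12;\sin^2 \psi\bigr) \; = \; \frac{\cos(\psi/2)}{\cos\psi},$$
which is the specialization of the Kummer-type quadratic transformation $F(a,a+\tfrac12;\tfrac12;z^2) = \tfrac12[(1-z)^{-2a}+(1+z)^{-2a}]$ at $a = 1/4$, $z = \sin\psi$, followed by the identities $(1-\sin\psi)^{1/2}+(1+\sin\psi)^{1/2} = 2\cos(\psi/2)$ and $(1-\sin^2\psi)^{1/2}=\cos\psi$. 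Combined with the half-angle identity $2\cos^2(\psi/2) = 1 + \cos\psi = 1 + d_\kappa$, this gives $(\phi')^2 = \cos^2\psi/\cos^2(\psi/2) = 2 d_\kappa^2/(1+d_\kappa)$ exactly.

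Substituting this back into the expression for $(d_\kappa')^2$, the factor $1 + d_\kappa$ in the denominator cancels against the same factor hidden in $1 - d_\kappa^2 = (1-d_\kappa)(1+d_\kappa)$, and one is left with $2(1-d_\kappa)(d_\kappa^2 - \lambda^2)$, as required. The one genuinely non-formal ingredient in this plan is the hypergeometric identity for $F(\tfrac14,\tfrac34;\tfrac12;\,\cdot\,)$ on which the whole evaluation of $(\phi')^2$ rests; I anticipate this being the main obstacle, but it is a classical quadratic transformation and, if not simply cited, can be verified directly by expanding $\tfrac12[(1+z)^{-1/2}+(1-z)^{-1/2}]$ with the binomial series and matching coefficients of $z^{2n}$ against those of $F(\tfrac14,\tfrac34;\tfrac12;z^2)$.
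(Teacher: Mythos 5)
Your proposal is correct and follows essentially the same route as the paper: both hinge on the evaluation $F(\tfrac14,\tfrac34;\tfrac12;\sin^2\psi)=\cos(\psi/2)/\cos\psi$ (the paper cites it from the Bateman Manuscript Project, item (11), p.~101 of Vol.~1) to compute $\phi'$, and then reduce to half-angle trigonometry. The only difference is bookkeeping --- the paper differentiates $d=\cos\psi$ through the chain $\psi\to\phi$ and squares at the end, while you square the relation $d^2=1-\kappa^2\sin^2\phi$ first and differentiate --- which changes nothing of substance.
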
 

\begin{proof} 
The initial value is clear: $\phi(0) = 0$ so that $\psi(0) = 0$ and therefore $d(0) = 1$; here and below, we drop the subscript $\kappa$ when convenient. From $d = \cos \psi$ follows $d ' = - (\sin \psi) \psi'$; from $\sin \psi = \kappa \sin \phi$ follows $(\cos \psi) \psi ' = \kappa (\cos \phi) \phi '$; and from $f \circ \phi = {\rm id}$ follows 
$$\phi ' = \frac{1}{f ' \circ \phi} = \frac{1}{F(\tfrac{1}{4}, \tfrac{3}{4}; \tfrac{1}{2} ; \kappa^2 \sin^2 \phi)} = \frac{1}{F(\tfrac{1}{4}, \tfrac{3}{4}; \tfrac{1}{2} ; \sin^2 \psi)} = \frac{\cos \psi}{\cos \frac{1}{2} \psi}$$
on account of the standard hypergeometric identity 
$$F(\tfrac{1}{4}, \tfrac{3}{4}; \tfrac{1}{2} ; \sin^2 \psi) = \frac{\cos \frac{1}{2} \psi}{\cos \psi}$$
for which we refer to item (11) on page 101 in Volume 1 of the compendious Bateman Manuscript Project [1953]. Thus 
$$d ' = - \sin \psi \, \Big(\kappa \frac{\cos \phi}{\cos \psi}\Big) \, \frac{\cos \psi}{\cos \frac{1}{2} \psi} = - 2 \sin \tfrac{1}{2} \psi \, (\kappa \, \cos \phi)$$
and so 
$$(d ')^2 = 4 \sin^2 \tfrac{1}{2} \psi \, (\kappa^2 - \sin^2 \psi) = 2 (1 - \cos \psi) \, (\kappa^2 - 1 + \cos^2 \psi).$$
\end{proof} 

\medbreak 

The solution to this initial value problem is readily identifiable in Weierstrassian terms. 

\medbreak 

\begin{theorem} \label{p} 
The function $d_{\kappa} : \R \to \R$ satisfies 
$$(1 - d_{\kappa}) (\tfrac{1}{3} + \pk) = \tfrac{1}{2} \kappa^2$$ 
where $\pk = \wp( \bullet ; g_2, g_3)$ is the Weierstrass function with invariants 
$$g_2 = \lambda^2 + \tfrac{1}{3} \; \; {\and} \; \; g_3 = \tfrac{1}{3} \lambda^2 - \tfrac{1}{27}.$$ 
\end{theorem}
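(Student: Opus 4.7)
My plan is to rewrite the asserted identity as $p = \pk$, where
$$p := \frac{\tfrac{1}{2}\kappa^2}{1 - d_{\kappa}} - \tfrac{1}{3},$$
and then to show that $p$ agrees with $\wp(\,\bullet\,; g_2, g_3)$ by verifying that it satisfies the Weierstrass differential equation with the stated invariants and has the correct Laurent behaviour at the origin.

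For the differential equation, I start from $(p + \tfrac{1}{3})(1 - d_{\kappa}) = \tfrac{1}{2} \kappa^2$ and differentiate to obtain $p'(1 - d_{\kappa}) = (p + \tfrac{1}{3})\, d_{\kappa}'$. Squaring and substituting Theorem \ref{d} to replace $(d_{\kappa}')^2$ by $2(1 - d_{\kappa})(d_{\kappa}^2 - \lambda^2)$ yields
$$(p')^2(1-d_{\kappa}) = 2(p + \tfrac{1}{3})^2(d_{\kappa}^2 - \lambda^2).$$
Using $d_{\kappa} = 1 - \tfrac{1}{2}\kappa^2/(p + \tfrac{1}{3})$ to express $d_{\kappa}^2 - \lambda^2$ as a rational function of $p$, clearing powers of $p + \tfrac{1}{3}$, and substituting $\kappa^2 = 1 - \lambda^2$, I expect the right-hand side to collapse to the cubic
$$4p^3 - (\lambda^2 + \tfrac{1}{3})p - (\tfrac{1}{3}\lambda^2 - \tfrac{1}{27}),$$
which is precisely $4p^3 - g_2 p - g_3$ for the claimed invariants.

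For the behaviour at the origin, I note that $d_{\kappa}(0) = 1$ and $d_{\kappa}'(0) = 0$ (from Theorem \ref{d}), so $d_{\kappa}$ is even to leading order; writing $d_{\kappa}(u) = 1 + a_2 u^2 + a_4 u^4 + O(u^6)$ and matching Taylor coefficients in the ODE of Theorem \ref{d} determines $a_2 = -\tfrac{1}{2}\kappa^2$ and $a_4 = \tfrac{1}{6}\kappa^2$. Inserting this into the definition of $p$ produces $p(u) = u^{-2} + O(u^2)$ as $u \to 0$; the vanishing of the constant term is exactly what justifies the shift $-\tfrac{1}{3}$ in the definition of $p$.

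To conclude, any solution to the Weierstrass differential equation $(p')^2 = 4p^3 - g_2 p - g_3$ is of the form $\wp(\,\bullet\, + c;\, g_2, g_3)$ for some constant $c$, and the expansion $p(u) = u^{-2} + O(u^2)$ at $u=0$ forces $c$ to lie in the period lattice; hence $p = \pk$. The main obstacle I anticipate is the algebraic manipulation in the derivation of the ODE, which requires careful bookkeeping of powers of $p + \tfrac{1}{3}$ and an application of $\kappa^2 = 1 - \lambda^2$ at just the right moment; the cancellation of the constant term in the Laurent expansion, though only a short computation, is the conceptually delicate point that ties the Shen parameters to the Weierstrass invariants.
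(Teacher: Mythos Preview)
Your proposal is correct and follows essentially the same approach as the paper: define $p$ by the displayed formula, verify via Theorem~\ref{d} that it satisfies the Weierstrass differential equation $(p')^2 = 4p^3 - g_2 p - g_3$ with the stated invariants, and check the pole structure at the origin to fix the translation constant. The paper merely sketches this verification (alternatively pointing to page~453 of Whittaker--Watson), whereas you have filled in the algebraic and Taylor-coefficient details; the one small point to tidy is that the ODE alone gives $a_2 \in \{0, -\tfrac12\kappa^2\}$, so you should note that $d_\kappa$ is nonconstant to select the correct branch.
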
 

\begin{proof} 
Either verify that the function 
$$p = - \tfrac{1}{3} + \frac{\frac{1}{2} \kappa^2}{1 - d}$$ 
has a pole at $0$ and satisfies the differential equation 
$$(p ')^2 = 4 p^3 - (\lambda^2 + \tfrac{1}{3}) p - (\tfrac{1}{3} \lambda^2 - \tfrac{1}{27})$$
or apply the argument that is to be found on page 453 in the classic treatise [1927] of Whittaker and Watson. 
\end{proof} 

\medbreak 

Thus, $d_{\kappa}$ is the restriction to $\R$ of an elliptic function: this is the elliptic function $\dd$ of Shen, given by 
$$\dd = 1 - \frac{\frac{1}{2} \kappa^2}{\tfrac{1}{3} + \pk} \, .$$

\medbreak 

The elliptic function $\dd$ and the Weierstrass function $\pk$ are evidently coperiodic. We shall write $(2 \ok, 2 \ok ')$ for their shared fundamental pair of periods such that $\ok > 0$ and $- \ii \, \ok ' > 0$. In the next two sections, we shall develop hypergeometric expressions for these periods. To close the present section, it is convenient to record the midpoint values of the Weiersstrass function $\pk$: in decreasing order, these zeros of the cubic 
$$4 e^3 - (\lambda^2 + \tfrac{1}{3}) e - (\tfrac{1}{3} \lambda^2 - \tfrac{1}{27})$$ 
are readily checked to be 
$$e_1 = \pk(\ok) = \tfrac{1}{6} + \tfrac{1}{2} \lambda$$
$$e_2 = \pk(\ok + \ok ') = \tfrac{1}{6} - \tfrac{1}{2} \lambda$$
$$e_3 = \pk(\ok ') = - \tfrac{1}{3}.$$

\medbreak 

\section{Fundamental periods in terms of $F_4$}

\medbreak 

The very definition of $\dd$ as an extension of $d_{\kappa}$ provides immediate access to the real half-period $\ok$ of $\dd$ and $\pk$. 

\medbreak 

\begin{theorem} \label{okF4}
$\ok = \tfrac{1}{2} \pi \, F(\tfrac{1}{4}, \tfrac{3}{4}; 1 ; \kappa^2).$
\end{theorem}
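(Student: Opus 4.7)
The plan is to reduce the claim to a standard Wallis-integral computation by identifying $\ok$ with $f(\tfrac{1}{2}\pi)$. Because the invariants $g_2, g_3$ of $\pk$ are real and produce three distinct real roots $e_1 > e_2 > e_3$, the period lattice of $\pk$ is rectangular and the real half-period $\ok$ is characterized as the smallest positive $t$ with $\pk(t) = e_1 = \tfrac{1}{6} + \tfrac{1}{2}\lambda$. Feeding this value into Theorem~\ref{p} gives $d_\kappa(\ok) = \lambda$, which is precisely the minimum of the range $[\lambda, 1]$ of $d_\kappa$ on $\R$.

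To locate this first real minimum directly in terms of $\phi$, I would use the formula $d_\kappa' = -2 \sin\tfrac{1}{2}\psi\,(\kappa \cos\phi)$ derived inside the proof of Theorem~\ref{d}. As $t$ runs from $0$ to $f(\tfrac{1}{2}\pi)$, the angle $\phi$ runs monotonically from $0$ to $\tfrac{1}{2}\pi$ and $\psi = \arcsin(\kappa \sin\phi)$ correspondingly from $0$ to $\alpha$, so both factors in $d_\kappa'$ have constant sign and $d_\kappa$ strictly decreases. At $t = f(\tfrac{1}{2}\pi)$ one reaches $\psi = \alpha$ and $d_\kappa = \cos\alpha = \lambda$. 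Combining this with the previous paragraph yields $\ok = f(\tfrac{1}{2}\pi)$, and therefore
$$\ok = \int_0^{\pi/2} F(\tfrac{1}{4}, \tfrac{3}{4}; \tfrac{1}{2}; \kappa^2 \sin^2 t) \, {\rm d}t.$$

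The rest is routine. Expand the hypergeometric integrand as a power series in $\kappa^2 \sin^2 t$ and integrate term by term, invoking the Wallis formula $\int_0^{\pi/2} \sin^{2n} t \, {\rm d}t = \tfrac{\pi}{2}\,(\tfrac{1}{2})_n / n!$. The factor $(\tfrac{1}{2})_n$ in the Wallis numerator cancels the $(\tfrac{1}{2})_n$ in the denominator of $F(\tfrac{1}{4}, \tfrac{3}{4}; \tfrac{1}{2}; \bullet)$, leaving precisely the series for $\tfrac{1}{2}\pi\,F(\tfrac{1}{4}, \tfrac{3}{4}; 1; \kappa^2)$. The only step carrying any subtlety is the identification $\ok = f(\tfrac{1}{2}\pi)$; once it is in hand, everything else is bookkeeping with Pochhammer symbols.
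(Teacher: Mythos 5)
Your argument is correct, and it reaches the crucial intermediate identity $\ok = f(\tfrac{1}{2}\pi) = \int_0^{\pi/2} F(\tfrac{1}{4},\tfrac{3}{4};\tfrac{1}{2};\kappa^2\sin^2 t)\,{\rm d}t$ by a genuinely different mechanism from the paper's. The paper shows directly that $2I$ (with $I = f(\tfrac{1}{2}\pi)$) is a period of $d_\kappa$, by verifying the quasi-periodicity $\phi(u+2I) = \phi(u)+\pi$, whence $\psi(u+2I) = -\psi(u)$ and $d(u+2I)=d(u)$; it then asserts that $2I$ is the least positive period. You instead invoke the standard characterization of the real half-period of a rectangular Weierstrass lattice as the first positive point where $\pk$ attains the value $e_1$, translate this through Theorem~\ref{p} into the statement that $\ok$ is the first positive point where $d_\kappa$ attains its minimum $\lambda$, and then locate that point by a monotonicity argument using the explicit formula $d_\kappa' = -2\sin\tfrac{1}{2}\psi\,(\kappa\cos\phi)$. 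Your route has the merit of making the ``least positive'' claim --- which the paper dismisses as ``easily seen'' --- completely explicit via the sign of $d_\kappa'$ on $(0, f(\tfrac{1}{2}\pi))$; its cost is that it leans on the midpoint-value facts for $\pk$ (in particular $\pk(\ok)=e_1$ with $e_1$ the largest root, and the rectangularity of the lattice), which the paper records in Section~1 but does not strictly need for this theorem. The concluding termwise integration via the Wallis formula $\int_0^{\pi/2}\sin^{2n}t\,{\rm d}t = \tfrac{\pi}{2}(\tfrac{1}{2})_n/n!$, with the cancellation of the $(\tfrac{1}{2})_n$ in the denominator parameter, is exactly the computation the paper performs.
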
 

\begin{proof} 
With 
$$I = \int_0^{\frac{1}{2} \pi} F(\tfrac{1}{4}, \tfrac{3}{4}; \tfrac{1}{2} ; \kappa^2 \sin^2 t) \, {\rm d} t$$
it may be verified by integration that 
$$\phi (u + 2 I ) = \phi (u) + \pi $$
so that 
$$\psi(u + 2 I) = - \psi(u)$$
and  
$$d (u + 2 I) = \cos \psi(u + 2 I) = \cos \psi (u) = d(u).$$ 
This shows that $\dd$ has $2 I$ as a period, which is easily seen to be least positive. Finally, expansion of the hypergeometric integrand and termwise integration show that 
$$\int_0^{\frac{1}{2} \pi} F(\tfrac{1}{4}, \tfrac{3}{4}; \tfrac{1}{2} ; \kappa^2 \sin^2 t) \, {\rm d} t= \tfrac{1}{2} \pi \, F(\tfrac{1}{4}, \tfrac{3}{4}; 1 ; \kappa^2).$$
\end{proof} 

\medbreak 

Access to the imaginary half-period $\ok '$ of $\dd$ and $\pk$ is facilitated by investigating the relationship between the primary Weierstrass function 
$$\pk = \wp (\bullet ; \ok, \ok ') = \wp (\bullet ; g_2, g_3)$$
and the auxiliary Weierstrass function 
$$q_{\kappa} = \wp (\bullet ; \ok, \tfrac{1}{2} \ok ') = \wp (\bullet ; h_2, h_3)$$
that results when its imaginary period is halved. Here, the invariants $h_2$ and $h_3$ of $q_{\kappa}$ are related to the invariants $g_2$ and $g_3$ of $\pk$ by 
$$h_2 = - 4 \, g_2 + 60 \, \pk (\ok ')^2$$
$$h_3 = 8 \, g_3 + 56 \, \pk (\ok ')^3.$$ 
This is a quite general consequence of the halving of a Weierstrassian period, for the proof of which we refer to Section 9.8 of [1989]. 

\medbreak 

\begin{theorem} \label{ok'F4}
$\ok ' = \ii \, \sqrt2 \, \tfrac{1}{2} \pi \, F(\tfrac{1}{4}, \tfrac{3}{4}; 1 ; 1 - \kappa^2).$
\end{theorem}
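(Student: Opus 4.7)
The plan is to identify the auxiliary Weierstrass function $q_{\kappa}$, introduced just above the theorem, as a rescaled version of the Shen--Weierstrass function $\pl$ associated to the complementary modulus $\lambda$. Once this identification is in hand, the real half-period of $\pl$, already known in terms of $F_4$ by Theorem \ref{okF4}, will translate into the imaginary half-period $\ok '$ of $\pk$.

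To achieve the identification, I would substitute $\pk(\ok ') = e_3 = -\tfrac{1}{3}$ into the halving-of-period formulas displayed above the theorem, producing explicit values $h_2 = \tfrac{16}{3} - 4\lambda^2$ and $h_3 = \tfrac{8}{3}\lambda^2 - \tfrac{64}{27}$. Using $\kappa^2 = 1 - \lambda^2$ these rearrange as $h_2 = 4(\kappa^2 + \tfrac{1}{3})$ and $h_3 = -8(\tfrac{1}{3}\kappa^2 - \tfrac{1}{27})$; the expressions in parentheses are precisely the Weierstrass invariants attached by Theorem \ref{p} to the Shen function at modulus $\lambda$ (whose complementary modulus is $\kappa$). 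The Weierstrassian homogeneity law $\wp(u; cL) = c^{-2}\wp(u/c; L)$, which scales the invariants by $g_k \mapsto c^{-2k}g_k$, then demands $c^{-4} = 4$ and $c^{-6} = -8$, forcing $c^{2} = -\tfrac{1}{2}$. Taking $c = \ii/\sqrt{2}$ and using that $\pl$ is even, I would conclude that $q_{\kappa}(u) = -2\,\pl(\ii\sqrt{2}\,u)$.

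Passing from functions to period lattices, the lattice of $q_{\kappa}$ is then $(\ii/\sqrt{2})$ times the lattice of $\pl$. Applying this scaling to the generators $2\ol$ and $2\,\ol'$ of the latter yields one purely imaginary generator $\ii\sqrt{2}\,\ol$ (with positive imaginary part, since $\ol > 0$) and one purely real generator $\sqrt{2}\,(-\ii\,\ol')$. Matching these against the generators $2\ok$ (real) and $\ok '$ (imaginary) of the lattice of $q_{\kappa}$, and enforcing the sign convention $-\ii\,\ok ' > 0$, I would read off $\ok ' = \ii\sqrt{2}\,\ol$. Invoking Theorem \ref{okF4} with $\kappa$ replaced by $\lambda$ (so that $\ol = \tfrac{1}{2}\pi\,F(\tfrac{1}{4}, \tfrac{3}{4}; 1; \lambda^2)$) then completes the calculation, while the companion identification $2\ok = \sqrt{2}\,(-\ii\,\ol')$ furnishes a built-in consistency check. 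The step requiring care is the final bookkeeping of lattice orientations: one must track sign conventions so that $\ii\sqrt{2}$ rather than $-\ii\sqrt{2}$ appears in the formula for $\ok '$, and one must verify that the two scaled generators of $L(q_{\kappa})$ really are one real and one imaginary, so that they can be matched one-for-one against $2\ok$ and $\ok '$.
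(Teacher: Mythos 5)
Your proposal is correct and follows essentially the same route as the paper: compute the invariants $h_2, h_3$ of $q_\kappa$ from the period-halving formulas and $\pk(\ok')=-\tfrac13$, recognize them as the Weierstrassian rescaling by $\ii\sqrt2$ of the invariants of $p_\lambda$ from Theorem \ref{p} at the complementary modulus, deduce $q_\kappa(z) = -2\,p_\lambda(\ii\sqrt2\,z)$, and read off $\ok' = \ii\sqrt2\,\ol$ before invoking Theorem \ref{okF4} for $\lambda$. Your extra attention to the lattice-orientation bookkeeping and the built-in consistency check $2\ok = -\ii\sqrt2\,\ol'$ is a welcome refinement of the paper's more terse final step, but the argument is the same.
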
 

\begin{proof} 
When the invariants of $\pk$ as displayed in Theorem \ref{p} and the subsequent evaluation $\pk(\ok ') = - 1/3$ are taken into account, we find that $q_{\kappa}$ has invariants 
$$h_2 = \tfrac{4}{3} + 4 \kappa^2 = (\ii \, \sqrt2)^4 (\kappa^2 + \tfrac{1}{3})$$ 
$$h_3 = \tfrac{8}{27} - \tfrac{8}{3} \kappa^2 = (\ii \, \sqrt2)^6 (\tfrac{1}{3} \kappa^2 - \tfrac{1}{27}).$$ 
By a further consultation of Theorem \ref{p} (but for the complementary modulus) in conjunction with the homogeneity relation for $\wp$ functions, we deduce that $q_{\kappa}$ is related to the Weierstrass function $p_{\lambda}$ of complementary modulus according to the rule 
$$q_{\kappa} (z) = - 2 \, p_{\lambda} (\ii \, \sqrt2 \, z).$$ 
Now on the one hand $q_{\kappa}$ has fundamental half-periods $\ok$ and $\tfrac{1}{2} \ok '$, while on the other hand $p_{\lambda}$ has fundamental half-periods $\ol$ and $\ol '$. In light of the above rule by which $q_{\kappa}$ and $\pl$ are related, we see that 
$$\ok ' = \ii \, \sqrt2 \, \ol.$$
It only remains to invoke Theorem \ref{okF4} (for the complementary modulus) and recall that $\lambda^2 = 1 - \kappa^2$. 

\end{proof} 

\medbreak 

\section{Fundamental periods in terms of $F_2$}

\medbreak 

In order to obtain equivalent expressions for $\ok$ and $\ok '$ in terms of the hypergeometric function $F(\tfrac{1}{2}, \tfrac{1}{2}; 1 ; \bullet)$ we shall reformulate the Weierstrass function $\pk$ in terms of classical Jacobian elliptic functions. 

\medbreak 

Recall from page 505 of [1927] that if the Weierstrass function $p$ has real midpoint values $e_1 > e_2 > e_3$ then 
$$p(z) = e_3 + \frac{e_1 - e_3}{\sn^2 [z (e_1 - e_3)^{1/2}]}$$
where $\sn = \sn (\bullet, k)$ is the Jacobian sine function with modulus $k \in (0, 1)$ given by  
$$k^2 = \frac{e_2 - e_3}{e_1 - e_3}$$
and its square $\sn^2$ has fundamental periods $(2 K, 2 \ii K')$ given by
$$K = \tfrac{1}{2}\pi \, F(\tfrac{1}{2}, \tfrac{1}{2}; 1 ; k^2) \; \; {\rm and} \; \; K' = \tfrac{1}{2}\pi \, F(\tfrac{1}{2}, \tfrac{1}{2}; 1 ; 1 - k^2).$$
Accordingly, $p$ itself has fundamental half-periods 
$$ \frac{K}{ (e_1 - e_3)^{1/2}} \; \; {\rm and} \; \; \ii \, \frac{K'}{ (e_1 - e_3)^{1/2}}.$$ 

\medbreak 

\begin{theorem} \label{okF2}
The half-periods $\ok$ and $\ok '$ of $\dd$ and $\pk$ are given by 
$$\sqrt{\tfrac{1 + \lambda}{2}} \; \ok = \tfrac{1}{2} \pi \, F(\tfrac{1}{2}, \tfrac{1}{2}; 1 ; \tfrac{1 - \lambda}{1 + \lambda})$$
and 
$$\sqrt{\tfrac{1 + \lambda}{2}} \; \ok ' = \ii \tfrac{1}{2} \pi \, F(\tfrac{1}{2}, \tfrac{1}{2}; 1 ; \tfrac{2 \lambda}{1 + \lambda}).$$
\end{theorem}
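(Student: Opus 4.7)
The plan is to apply the Weierstrassian-to-Jacobian conversion formulas recalled immediately before the theorem to the specific Weierstrass function $\pk$ whose midpoint values $e_1, e_2, e_3$ were tabulated at the close of Section 1. There are no additional ideas needed beyond direct substitution, so the proof will amount to a short computation.

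First I would compute the relevant differences of midpoint values. From $e_1 = \tfrac{1}{6} + \tfrac{1}{2}\lambda$, $e_2 = \tfrac{1}{6} - \tfrac{1}{2}\lambda$, and $e_3 = -\tfrac{1}{3}$ it follows at once that
$$e_1 - e_3 = \tfrac{1+\lambda}{2}, \qquad e_2 - e_3 = \tfrac{1-\lambda}{2},$$
so that the associated Jacobian modulus $k \in (0,1)$ defined by $k^2 = (e_2 - e_3)/(e_1 - e_3)$ satisfies
$$k^2 = \tfrac{1-\lambda}{1+\lambda}, \qquad 1 - k^2 = \tfrac{2\lambda}{1+\lambda}.$$
Note that $0 < \lambda < 1$ ensures $0 < k < 1$, so the hypotheses of the conversion formulas are met.

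Next I would appeal directly to the displayed formulas for the fundamental half-periods of a Weierstrass function with real midpoint values: the primary half-period is $K/(e_1 - e_3)^{1/2}$ and the imaginary half-period is $\ii K'/(e_1 - e_3)^{1/2}$, where $K$ and $K'$ are the standard complete elliptic integrals attached to the modulus $k$. Since $\pk$ has the same fundamental periods as $\dd$, the identifications $\ok = K/(e_1 - e_3)^{1/2}$ and $\ok' = \ii K'/(e_1 - e_3)^{1/2}$ hold; multiplying both by $\sqrt{(1+\lambda)/2} = (e_1 - e_3)^{1/2}$ and substituting the tabulated values of $K$ and $K'$ in terms of the hypergeometric function $F(\tfrac{1}{2}, \tfrac{1}{2}; 1; \bullet)$ at the arguments $k^2$ and $1 - k^2$ computed above yields the two stated identities.

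I do not anticipate any obstacle: the argument is purely a matter of identifying $k^2$ and $e_1 - e_3$ correctly from the invariants of $\pk$ in Theorem \ref{p}, and then quoting the standard conversion recorded in the paragraph preceding the theorem. The only mild subtlety is to verify that $\ok$ and $\ok'$ really are the primary and imaginary fundamental half-periods in the sense required by the conversion formulas — that is, that $\pk(\ok) = e_1$ and $\pk(\ok') = e_3$ — but this is exactly the content of the midpoint evaluations recorded at the end of Section 1.
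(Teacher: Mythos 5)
Your proposal is correct and coincides with the paper's own proof: both simply substitute the tabulated midpoint values $e_1 = \tfrac{1}{6} + \tfrac{1}{2}\lambda$, $e_2 = \tfrac{1}{6} - \tfrac{1}{2}\lambda$, $e_3 = -\tfrac{1}{3}$ into the Weierstrass-to-Jacobi conversion recalled just before the theorem, obtaining $k^2 = \tfrac{1-\lambda}{1+\lambda}$ and $(e_1 - e_3)^{1/2} = \sqrt{\tfrac{1+\lambda}{2}}$. Your closing remark about checking $\pk(\ok) = e_1$ and $\pk(\ok') = e_3$ is a reasonable extra care that the paper handles implicitly via the ordering of the midpoint values recorded at the end of Section 1.
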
 

\begin{proof} 
Apply to $p = \pk$ the foregoing recollections. As noted after Theorem \ref{p}, $\pk$ has midpoint values
$$e_1 = \tfrac{1}{6} + \tfrac{1}{2} \lambda, \, \, e_2 = \tfrac{1}{6} - \tfrac{1}{2} \lambda, \, \, e_3 = - \tfrac{1}{3}$$
so that 
$$k^2 = \frac{e_2 - e_3}{e_1 - e_3} = \frac{1 - \lambda}{1 + \lambda}$$
and 
$$1 - k^2 = \frac{2 \lambda}{1 + \lambda}$$
while 
$$(e_1 - e_3)^{1/2} = \sqrt{\tfrac{1 + \lambda}{2}}.$$ 
\end{proof} 

\medbreak

Of course, we may also use the assembled information to express the elliptic function $\dd$ in terms of the classical Jacobian elliptic functions to modulus $k$. Thus, the relation 
$$\pk (z) = - \tfrac{1}{3} + \frac{\tfrac{1}{2} (1 + \lambda)}{\sn^2 \Big[z \, (\tfrac{1}{2} (1 + \lambda))^{1/2}\Big]}$$
may be recast as 
$$\dd (z) = 1 - (1 - \lambda) \, \sn^2 \Big[(\tfrac{1}{2} (1 + \lambda))^{1/2} \Big];$$
equivalently, it may be recast either in terms of the Jacobian cosine function ${\rm cn}$ as 
$$\dd (z) = \lambda + (1 - \lambda) \, {\rm cn}^2 \Big[(\tfrac{1}{2} (1 + \lambda))^{1/2} \Big]$$
or in terms of the Jacobian `delta amplitude' ${\rm dn}$ as 
$$\dd (z) = - \lambda + (1 + \lambda)  \, {\rm dn}^2 \Big[(\tfrac{1}{2} (1 + \lambda))^{1/2} \Big].$$
Incidentally, it may be checked that the Jacobian modulus $k$ equals $\tan \tfrac{1}{2} \alpha$. 

\medbreak 

\section{The transfer principle}

\medbreak 

All the pieces are in place: we are now in a position to deduce the hypergeometric identities that opened our paper. 

\medbreak 

\begin{theorem} \label{hyp}
If $0 < \lambda < 1$ then 
$$\sqrt{1 + \lambda} \,  F(\tfrac{1}{4}, \tfrac{3}{4}; 1 ; 1 - \lambda^2) =  \sqrt2 \, F(\tfrac{1}{2}, \tfrac{1}{2}; 1 ; \frac{1 - \lambda}{1 + \lambda})$$ 
and 
$$\sqrt{1 + \lambda} \,  F(\tfrac{1}{4}, \tfrac{3}{4}; 1 ; \lambda^2) =  F(\tfrac{1}{2}, \tfrac{1}{2}; 1 ; \frac{2 \lambda}{1 + \lambda})\, .$$
\end{theorem}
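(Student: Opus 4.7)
The plan is essentially a matching exercise: Sections 2 and 3 have each computed the fundamental half-periods $\ok$ and $\ok'$ of $\dd$ in two different ways, one involving $F(\tfrac{1}{4},\tfrac{3}{4};1;\bullet)$ and one involving $F(\tfrac{1}{2},\tfrac{1}{2};1;\bullet)$. To prove the theorem I would simply equate the two available expressions for the real half-period, then the two available expressions for the imaginary half-period, and simplify. Specifically, Theorem \ref{okF4} paired with Theorem \ref{okF2} yields one identity, and Theorem \ref{ok'F4} paired with Theorem \ref{okF2} yields the other.

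Concretely, for the real half-period the two theorems give
$$\tfrac{1}{2}\pi \, F(\tfrac{1}{4},\tfrac{3}{4};1;\kappa^2) \;=\; \ok \;=\; \sqrt{\tfrac{2}{1+\lambda}} \cdot \tfrac{1}{2}\pi \, F(\tfrac{1}{2},\tfrac{1}{2};1;\tfrac{1-\lambda}{1+\lambda}),$$
whence multiplication by $\sqrt{1+\lambda}/\tfrac{1}{2}\pi$ together with the substitution $\kappa^2 = 1-\lambda^2$ produces the first identity. For the imaginary half-period the two theorems give
$$\ii\sqrt{2}\cdot \tfrac{1}{2}\pi \, F(\tfrac{1}{4},\tfrac{3}{4};1;1-\kappa^2) \;=\; \ok' \;=\; \ii\sqrt{\tfrac{2}{1+\lambda}} \cdot \tfrac{1}{2}\pi \, F(\tfrac{1}{2},\tfrac{1}{2};1;\tfrac{2\lambda}{1+\lambda}),$$
whence cancelling the common factor $\ii\sqrt{2}\cdot\tfrac{1}{2}\pi$ and using $1-\kappa^2 = \lambda^2$ delivers the second identity (note that the $\sqrt{2}$ coming from Theorem \ref{ok'F4} cancels exactly against the $\sqrt{2}$ hidden in $\sqrt{2/(1+\lambda)}$, leaving the bare factor $\sqrt{1+\lambda}$).

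There is no genuine obstacle here; the analytic work has already been discharged in Sections 1--3, where the half-periods were extracted respectively from the defining integral for $\dd$ (giving the $F_4$-expressions) and from the Weierstrassian-to-Jacobian reformulation of $\pk$ (giving the $F_2$-expressions). The only point requiring mild care is the bookkeeping of the factor $\ii\sqrt{2}$ that entered $\ok'$ through the period-halving argument of Theorem \ref{ok'F4}, and the verification that it combines cleanly with the Jacobian prefactor $\sqrt{(1+\lambda)/2}$ to leave the real coefficient $\sqrt{1+\lambda}$ on the left-hand side of the second identity.
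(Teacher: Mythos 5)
Your proposal is correct and is precisely the argument the paper gives: equate the $F_4$-expression for $\ok$ from Theorem \ref{okF4} with the first formula of Theorem \ref{okF2}, and the $F_4$-expression for $\ok'$ from Theorem \ref{ok'F4} with the second formula of Theorem \ref{okF2}, then substitute $\kappa^2 = 1 - \lambda^2$. Your bookkeeping of the factors $\sqrt{(1+\lambda)/2}$ and $\ii\sqrt{2}$ is accurate, so nothing further is needed.
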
 

\begin{proof} 
Direct comparison of Theorem \ref{okF4} with the first formula of Theorem \ref{okF2} yields 
$$\sqrt{1 + \lambda} \,  F(\tfrac{1}{4}, \tfrac{3}{4}; 1 ; \kappa^2) =  \sqrt2 \, F(\tfrac{1}{2}, \tfrac{1}{2}; 1 ; \frac{1 - \lambda}{1 + \lambda})$$ 
while direct comparison of Theorem \ref{ok'F4} with the second formula of Theorem \ref{okF2} yields 
$$\sqrt{1 + \lambda} \,  F(\tfrac{1}{4}, \tfrac{3}{4}; 1 ; 1 - \kappa^2) =  F(\tfrac{1}{2}, \tfrac{1}{2}; 1 ; \frac{2 \lambda}{1 + \lambda})\, .$$ 
\end{proof} 

\medbreak 

As in [1995] these hypergeometric identities entail a connexion between the base $q_4$ that is appropriate to the signature four elliptic theory and the base $q$ that is appropriate to the classical elliptic theory. To be explicit, Theorem \ref{hyp} implies that 
$$\frac{F(\tfrac{1}{4}, \tfrac{3}{4}; 1 ; 1 - \lambda^2)}{F(\tfrac{1}{4}, \tfrac{3}{4}; 1 ; \lambda^2)} = \sqrt2 \, \frac{F(\tfrac{1}{2}, \tfrac{1}{2}; 1 ; \frac{1 - \lambda}{1 + \lambda})}{F(\tfrac{1}{2}, \tfrac{1}{2}; 1 ; \frac{2 \lambda}{1 + \lambda})}$$
whence 
$$q_4 \, (\lambda^2) : = \exp \Big\{ - \pi \sqrt2 \, \frac{F(\tfrac{1}{4}, \tfrac{3}{4}; 1 ; 1 - \lambda^2)}{F(\tfrac{1}{4}, \tfrac{3}{4}; 1 ; \lambda^2)} \Big\}$$
and 
$$q \, \big(\tfrac{2 \lambda}{1 + \lambda}\big) : = \exp \Big\{ - \pi \, \frac{F(\tfrac{1}{2}, \tfrac{1}{2}; 1 ; 1 - \frac{2 \lambda}{1 + \lambda}))}{F(\tfrac{1}{2}, \tfrac{1}{2}; 1 ; \frac{2 \lambda}{1 + \lambda})}\Big\}$$
satisfy the relation 
$$q_4 \, (\lambda^2) = q \, \big(\tfrac{2 \lambda}{1 + \lambda}\big)^2 .$$
\medbreak 

The signature four transfer principle now follows exactly as in [1995]. 

\medbreak 

\begin{center} 
{\small R}{\footnotesize EFERENCES}
\end{center} 
\medbreak

[1927] E.T. Whittaker and G.N. Watson, {\it A Course of Modern Analysis}, Fourth Edition, Cambridge University Press. 

\medbreak 

[1953] A. Erdelyi (director), {\it Higher Transcendental Functions}, Volume 1, McGraw-Hill. 

\medbreak 

[1989] D.F. Lawden, {\it Elliptic Functions and Applications}, Applied Mathematical Sciences {\bf 80}, Springer-Verlag. 

\medbreak 

[1995] B.C. Berndt, S. Bhargava, and F.G. Garvan, {\it Ramanujan's theories of elliptic functions to alternative bases}, Transactions of the American Mathematical Society {\bf 347} 4163-4244. 

\medbreak 

[2014] Li-Chien Shen, {\it On a theory of elliptic functions based on the incomplete integral of the hypergeometric function $_2 F_1 (\frac{1}{4}, \frac{3}{4} ; \frac{1}{2} ; z)$}, Ramanujan Journal {\bf 34} 209-225.

\medbreak

\end{document}